\documentclass[11pt,a4paper]{article}

\usepackage[utf8]{inputenc}
\usepackage[T1]{fontenc}
\usepackage{amsmath,amssymb,amsthm}
\usepackage{mathtools}
\usepackage{booktabs}
\usepackage{graphicx}
\usepackage{xcolor}
\usepackage{hyperref}
\usepackage[margin=1in]{geometry}
\usepackage{enumitem}
\usepackage[numbers]{natbib}

\hypersetup{colorlinks=true, linkcolor=blue!60!black, citecolor=blue!60!black, urlcolor=blue!60!black}

\newtheorem{theorem}{Theorem}
\newtheorem{proposition}{Proposition}
\newtheorem{lemma}{Lemma}
\newtheorem{corollary}{Corollary}
\theoremstyle{definition}
\newtheorem{remark}{Remark}

\title{\textbf{Static Equilibria of Perturbed Spheres}\\[6pt]
\large A Single-Harmonic Class Map, a Parity Obstruction,\\
and a Certified Counter for the Mono-Monostatic Regime}
\author{Vincent Wesley Couey\\[2pt]\small Independent researcher\\[2pt]
\small\texttt{vinnycouey@gmail.com}\\[2pt]
\small ORCID: \href{https://orcid.org/0009-0005-6869-308X}{0009-0005-6869-308X}}
\date{}

\begin{document}
\maketitle

\begin{abstract}
V\'arkonyi and Domokos proved that homogeneous convex bodies with any prescribed numbers $S\ge1$ of stable and
$U\ge1$ of unstable static equilibria exist, the case $S=U=1$ being the mono-monostatic G\"omb\"oc. Their
result is an existence statement. We give a complete \emph{constructive} answer for the simplest
nondegenerate shapes: a homogeneous body whose boundary is the unit sphere perturbed radially by a single real
spherical harmonic $Y_\ell^m$ of degree $\ell\ge2$ and order $1\le m\le\ell$ has exactly
\[
S \;=\; U \;=\; m(\ell-m+1)
\]
stable and unstable equilibria, for every amplitude in the convex range when $m\ge2$, and for small amplitude
when $m=1$. The reduction to the critical points of the perturbing harmonic is in fact an identity when
$m\ge2$: the symmetry of a single tesseral harmonic pins the centroid at the origin exactly, so the
centroid-to-surface distance is a strictly increasing function of the harmonic and the two problems coincide
with no error term. We then count the critical points of $Y_\ell^m$ from its separable structure and verify
the Poincar\'e--Hopf balance in index form, the polar monkey-saddles persisting unsplit with index $1-m$.
Three consequences follow. (i) A single harmonic populates only the \emph{diagonal}
$S=U$ of the equilibrium-class lattice; in particular no single harmonic of degree $\ge2$ is
mono-monostatic. (ii) Any centrally symmetric (even-degree) perturbation has even $S$ and $U$ and cannot be
mono-monostatic, a parity obstruction. (iii) Mono-monostaticity is therefore intrinsically multi-harmonic.
A predict-then-confirm numerical study on eight single-harmonic bodies matches the formula exactly. For the
complementary multi-harmonic regime, where mono-monostatic bodies live, we give a \emph{certified} equilibrium
counter (interval arithmetic on the centroid-to-surface distance, per-box Krawczyk uniqueness,
interval-Hessian classification, and stereographic polar charts) that, given the centroid, provably neither
under- nor over-counts, validated against the single-harmonic law and the polar critical structure. It
certifies that specific near-spherical constructions are mono-monostatic, including a known analytic
parameterization, settling by certified computation a question that drainage-basin and seed-based methods
leave ambiguous.

\medskip\noindent\textbf{Keywords:} static equilibria, convex body, spherical harmonic, mono-monostatic body,
G\"omb\"oc, Poincar\'e--Hopf theorem, central symmetry.
\end{abstract}

% =====================================================================
\section{Introduction}

A homogeneous convex body resting on a horizontal plane under gravity sits in equilibrium when its centroid
lies vertically above the contact point. How many such equilibria a body admits, and of which stability types,
is a classical question brought back into focus by Arnold's 1995 conjecture and its resolution by V\'arkonyi
and
Domokos~\cite{varkonyi2006a,varkonyi2006b}: there exist homogeneous convex bodies with exactly one stable and
one unstable equilibrium (mono-monostatic bodies, the G\"omb\"oc), and more generally bodies realizing any
class $(S,U)$ with $S,U\ge1$. Writing $S$, $U$, $H$ for the numbers of stable, unstable and saddle equilibria,
the Poincar\'e--Hopf theorem on $S^2$ gives the constraint $S-H+U=2$~\cite{varkonyi2006b}.

The V\'arkonyi--Domokos theorem is an \emph{existence} result: it guarantees a body for each class but does
not, in elementary closed form, say which class a \emph{given} shape realizes. The mono-monostatic bodies it
produces are famously close to a sphere, with shape deviations on the order of $10^{-3}$ to
$10^{-5}$~\cite{varkonyi2006a}, and their explicit geometry is delicate~\cite{sloan2023}. This motivates a
constructive question at the opposite, simplest end of the problem:

\begin{quote}
\emph{What is the equilibrium class $(S,U)$ of the simplest nondegenerate perturbation of a sphere, namely a
single spherical-harmonic radial deformation?}
\end{quote}

We answer it completely. Our main result (Theorem~\ref{thm:main}) is that a single real harmonic $Y_\ell^m$
($\ell\ge2$, $m\ge1$) produces exactly $S=U=m(\ell-m+1)$ equilibria of each stability type. Two ingredients are
classical and we use them as such: equilibria are the critical points of the centroid-to-surface
distance~\cite{varkonyi2006b}, and the critical points of a single tesseral harmonic follow from its separable
structure and the zeros of associated Legendre functions~\cite{hobson,szego}. What is new is threefold.
\emph{(i)} We show the passage from the body to the harmonic is not merely a leading-order approximation but,
for every order $m\ge2$, an \emph{exact identity}: the symmetry of a single tesseral harmonic forces the
perturbed centroid to the origin exactly (Lemma~\ref{lem:centroid}), so the count holds for the entire convex
amplitude range rather than only asymptotically (Lemma~\ref{lem:reduction}). \emph{(ii)} Assembling this into
the closed-form class map $Y_\ell^m\mapsto(m(\ell-m+1),m(\ell-m+1))$ yields three structural consequences
(Section~\ref{sec:consequences}): the reachable classes are exactly the diagonal $\{(n,n):n\ge2\}$, no single
harmonic is mono-monostatic, and centrally symmetric perturbations have \emph{even} $S$ and $U$, a quantitative
parity obstruction sharpening the qualitative exclusion that follows from the symmetry classification of
Domokos, L\'angi and V\'arkonyi~\cite{domokos2023symmetry}. Together these locate mono-monostaticity precisely:
it requires at least two harmonics and odd-degree content. \emph{(iii)} For that multi-harmonic regime we give a
certified counter (below). We distinguish all of this from the well-studied but different question of the
critical points of \emph{random} spherical harmonics~\cite{nicolaescu,cammarota}, whose counts grow like
$\ell^2$; a single tesseral harmonic is highly nongeneric and its count is exactly $m(\ell-m+1)$.

The single-harmonic classification shows that mono-monostatic bodies must be multi-harmonic
(Corollaries~\ref{cor:nomono}--\ref{cor:parity}); these are near-spherical, and counting their equilibria
reliably is notoriously delicate. Seed-based critical-point finders undercount (small basins of attraction
are missed) while grid-based basin methods overcount (the count fails to converge under refinement), so the
mono-monostatic status of a given near-spherical shape can be genuinely ambiguous in practice. Our second
contribution (Section~\ref{sec:certified}) is a \emph{certified} equilibrium counter that removes this
ambiguity: it encloses every critical point of the centroid-to-surface distance by interval arithmetic,
certifies each by the Krawczyk test, classifies it by its interval Hessian, and treats the poles in
stereographic charts. Validated against the single-harmonic law, it certifies, for example, that a known
analytic parameterization of the sphere~\cite{sloan2023} yields a mono-monostatic body.

% =====================================================================
\section{Setup and the reduction lemma}\label{sec:setup}

Let $g:S^2\to\mathbb{R}$ be smooth and let $B_\varepsilon$ be the body with radial boundary
\begin{equation}
r(u) = 1 + \varepsilon\, g(u), \qquad u\in S^2,
\label{eq:body}
\end{equation}
taken homogeneous and, for $\varepsilon$ small enough, convex. Its boundary point in direction $u$ is
$x(u)=r(u)\,u$ and its centroid is $\mathbf{c}=\mathbf{c}(\varepsilon)$. A direction $u$ is a static
equilibrium iff the surface normal at $x(u)$ passes through $\mathbf{c}$, equivalently iff $u$ is a critical
point of the squared centroid-to-surface distance
\begin{equation}
\rho(u) \;=\; \lvert x(u)-\mathbf{c}\rvert^2,
\label{eq:rho}
\end{equation}
with local minima of $\rho$ stable, local maxima unstable, and saddles
saddle-type~\cite{varkonyi2006b}. (Intuitively the body rests on the surface point nearest the centroid.)

\begin{lemma}[Centroid]\label{lem:centroid}
Let $g$ be a spherical harmonic of degree $\ell\ge2$. Then $\mathbf{c}=O(\varepsilon^2)$. If moreover
$g=Y_\ell^m$ with $m\ge2$, then $\mathbf{c}=\mathbf{0}$ \emph{exactly}, for every $\varepsilon$ for which
$B_\varepsilon$ is a body.
\end{lemma}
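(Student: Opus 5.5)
Write the centroid in polar coordinates. For a star-shaped body with radial function $r(u)$, the volume and first moment are
\[
V=\int_{S^2}\frac{r(u)^3}{3}\,d\sigma(u),\qquad V\,\mathbf{c}=\int_{S^2}\frac{r(u)^4}{4}\,u\,d\sigma(u).
\]
Both formulas come from integrating $\int_0^{r(u)} s^2\,ds$ and $\int_0^{r(u)} s^3\,ds$ along rays. With $r=1+\varepsilon g$ the first moment expands as
\[
\tfrac14\int_{S^2}\bigl(1+4\varepsilon g+6\varepsilon^2g^2+4\varepsilon^3g^3+\varepsilon^4g^4\bigr)\,u\,d\sigma .
\]

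For the first claim, look at the expansion term by term.
\begin{itemize}
\item The order-zero term $\int u\,d\sigma$ vanishes.
\item The linear term $\int g\,u\,d\sigma$ is the $L^2(S^2)$ pairing of $g$ with the coordinate functions $u_1,u_2,u_3$. These are degree-one harmonics, so orthogonality of spherical harmonics of different degrees makes the term vanish when $\ell\ge2$.
\item The remaining terms are $O(\varepsilon^2)$, uniformly because $g$ is bounded.
\end{itemize}
Since $V=\tfrac{4\pi}{3}+O(\varepsilon)$ is bounded away from zero for small $\varepsilon$, we get $\mathbf{c}=O(\varepsilon^2)$.

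For the exact statement, avoid the expansion entirely and use symmetry of $r$ itself. Take the real tesseral harmonic $Y_\ell^m(\theta,\phi)=N\,P_\ell^m(\cos\theta)\cos(m\phi)$; the $\sin(m\phi)$ case is a rotation about the $z$-axis by $\pi/(2m)$ and is handled identically. Two symmetries of the sphere leave $g$, and hence $r$, $B_\varepsilon$ and $\mathbf c$, invariant.
\begin{itemize}
\item \textbf{Rotation by $2\pi/m$ about the $z$-axis.} It preserves $\cos(m\phi)$, so $\mathbf c$ lies on the fixed-point set of this rotation. For $m\ge2$ this rotation is nontrivial, and its fixed set is the $z$-axis. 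Hence $c_x=c_y=0$.
\item \textbf{A symmetry fixing the $z$-component.} The $z$-component is where the argument depends on parity.
\begin{itemize}
\item If $\ell-m$ is even, $P_\ell^m(-t)=P_\ell^m(t)$, so the reflection $z\mapsto -z$ preserves $g$ and forces $c_z=0$.
\item If $\ell-m$ is odd, $P_\ell^m$ is odd in $t$. Then the map $(\theta,\phi)\mapsto(\pi-\theta,\phi+\pi/m)$ is the composition of the reflection in the $xy$-plane with rotation by $\pi/m$. It sends $g\mapsto(-1)(-1)g=g$ because $\cos(m\phi+\pi)=-\cos m\phi$. It is a linear isometry, so it preserves $B_\varepsilon$, and it sends $c_z\mapsto -c_z$. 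Hence $c_z=0$.
\end{itemize}
\end{itemize}
The key general fact is that the centroid of a homogeneous body is equivariant under linear isometries preserving the body, so it lies in the common fixed subspace of its symmetry group. None of this uses smallness of $\varepsilon$, only that $B_\varepsilon$ is a well-defined body (that is, $r>0$). So $\mathbf c=\mathbf 0$ for every admissible $\varepsilon$.

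The main obstacle is the $z$-component when $\ell-m$ is odd, where the naive reflection does not preserve $g$. The fix is the composite rotoreflection above, and it needs $m\ge1$ to supply the half-period shift $\pi/m$. The hypothesis $m\ge2$ is really needed only for the $xy$-components. For $m=1$ the rotation by $2\pi$ is trivial and $c_x$ is generically nonzero at order $\varepsilon^2$: terms like $\int g^2\,u\,d\sigma$ survive when $g$ is odd. This explains why the exact statement excludes $m=1$ and $m=0$.
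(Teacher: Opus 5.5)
Your proof is correct. The first part (the polar moment formula, orthogonality of $g$ to the degree-one coordinate functions, and volume bounded away from zero) is essentially the paper's argument.

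For the exact vanishing you take a genuinely different route. The paper expands $r^4=\sum_k\binom4k\varepsilon^k g^k$ and kills each moment $\int_{S^2}u\,g^k\,du$, $k=0,\dots,4$, by Fourier bookkeeping:
\begin{itemize}
\item \emph{Horizontal components.} $g^k$ carries only the azimuthal modes $jm$, and for $m\ge2$ these never equal the mode $1$ of $u_x,u_y$.
\item \emph{Vertical component.} Only the $\phi$-average of $g^k$ survives. It is zero for odd $k$, and for even $k$ it leaves the odd integrand $\cos\theta\,(P_\ell^m)^k$.
\end{itemize}
You instead exhibit the symmetries of $g$ itself and use equivariance of the centroid under linear isometries preserving the body. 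These are the $\mathbb{Z}_m$ rotation, plus either the reflection $z\mapsto -z$ or the rotoreflection $(\theta,\phi)\mapsto(\pi-\theta,\phi+\pi/m)$, according to the parity of $\ell-m$.

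The paper's computation is really the Fourier shadow of these same symmetries. Its advantage is that it pinpoints exactly which term breaks when $m=1$ (the $k=3$ horizontal moment), which is what feeds Remark~\ref{rem:m1centroid}.

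Your argument is shorter and more robust. It never uses the binomial form of $r^4$, so it applies verbatim to any radial function of the form $r=F(g)$. It also explains at once the exceptional case in that remark: for $m=1$ and $\ell$ even, your rotoreflection is the antipodal map, which forces $\mathbf c=\mathbf 0$.

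One correction to your closing aside. It is not needed for the lemma, but it is wrong as stated. For $m=1$ the centroid is not generically nonzero at order $\varepsilon^2$, and the mechanism you give has the parity backwards. For any pure degree-$\ell$ harmonic, $g(-u)=(-1)^\ell g(u)$ makes $g^2$ and $g^4$ even functions. Hence $\int_{S^2}g^2\,u\,d\sigma=0$ always, odd $g$ included.

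The first term that can survive is $\varepsilon^3\int_{S^2}g^3\,u\,d\sigma$, and only when $\ell$ is odd. It is indeed nonzero for, e.g., $Y_3^1$. So for $m=1$ the centroid is $O(\varepsilon^3)$, and identically zero when $\ell$ is even, in agreement with Remark~\ref{rem:m1centroid}.
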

\begin{proof}
The first moment of the solid is $\int_{B_\varepsilon}x\,dV=\tfrac14\int_{S^2}u\,r(u)^4\,du$. Expanding
$r^4=1+4\varepsilon g+O(\varepsilon^2)$ gives
$\tfrac14\!\int u\,du + \varepsilon\!\int u\,g(u)\,du + O(\varepsilon^2)$. The first integral vanishes by
oddness. The components of $u$ are the degree-one harmonics $Y_1^{m}$; since $g$ has degree $\ell\ge2$,
orthogonality of spherical harmonics of distinct degree gives $\int_{S^2}u\,g(u)\,du=0$. The volume is
$\tfrac13\int r^3=\tfrac{4\pi}{3}+O(\varepsilon^2)$ (the $O(\varepsilon)$ term vanishes as $g\perp Y_0^0$).
Hence $\mathbf{c}=O(\varepsilon^2)$.

For $g=Y_\ell^m$ with $m\ge2$ the first moment vanishes identically, to all orders in $\varepsilon$. Write
$r^4=\sum_{k=0}^{4}\binom{4}{k}\varepsilon^k g^k$; it suffices that $\int_{S^2}u\,g^k\,du=0$ for
$k=0,\dots,4$. \emph{Horizontally:} $g^k=P_\ell^m(\cos\theta)^k\cos^k(m\phi)$ has azimuthal content only in
the modes $\cos(jm\phi)$, $j=0,\dots,k$, whereas $u_x$ and $u_y$ carry only the mode $j m=1$; since $m\ge2$
no $jm$ equals $1$, and the $\phi$-integral annihilates every term, so
$\int u_x g^k\,du=\int u_y g^k\,du=0$. \emph{Vertically:} $u_z=\cos\theta$ is $\phi$-independent, so only the
constant ($j=0$) azimuthal part of $g^k$ survives the $\phi$-integral. That part vanishes for odd $k$, since
$\int_0^{2\pi}\cos^k(m\phi)\,d\phi=0$; and for even $k$ the surviving $\theta$-integrand is
$\cos\theta\,P_\ell^m(\cos\theta)^k$, which is odd in $\cos\theta$ because $P_\ell^m(-x)=(-1)^{\ell+m}
P_\ell^m(x)$ renders $(P_\ell^m)^k$ even for even $k$, and it therefore integrates to zero on $[-1,1]$.
Hence $\mathbf{c}=\mathbf{0}$.
\end{proof}

\begin{remark}\label{rem:m1centroid}
The exact vanishing is special to $m\ge2$. For $m=1$ the mode $jm=1$ is present, the horizontal argument
fails at $k=3$, and $\mathbf{c}$ is in general nonzero: it is $O(\varepsilon^3)$, and vanishes identically
only when $\ell+m$ is odd. The $O(\varepsilon^2)$ bound of the first paragraph is thus never sharp for a
single harmonic; we retain it because it is all that the general degree-$\ell$ statement supports.
\end{remark}

\begin{remark}
The hypothesis $\ell\ge2$ is essential: a degree-one component of $g$ shifts the centroid at order
$\varepsilon$, and such shapes fall outside the present analysis. This is exactly the regime of the explicit
G\"omb\"oc constructions, to which the single-harmonic theorem does not apply; they are treated by the
certified counter of Section~\ref{sec:certified}.
\end{remark}

\begin{lemma}[Reduction]\label{lem:reduction}
Let $g=Y_\ell^m$ with $\ell\ge2$.
\begin{enumerate}[nosep,label=(\roman*)]
\item If $m\ge2$ then $\rho(u)=\big(1+\varepsilon g(u)\big)^2$ \emph{exactly}, and for every $\varepsilon$ in
the convex range the equilibria of $B_\varepsilon$ are in bijection with the critical points of $g$,
preserving both type and index at every critical point, the poles included.
\item If $m=1$ then $\rho(u)=1+2\varepsilon g(u)+O(\varepsilon^2)$, and the same bijection holds for
$\varepsilon$ below a threshold.
\end{enumerate}
In both cases minima of $g$ correspond to stable and maxima to unstable equilibria.
\end{lemma}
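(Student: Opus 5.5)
For part (i) the plan is to use Lemma~\ref{lem:centroid} directly. When $m\ge2$ it gives $\mathbf{c}=\mathbf{0}$ exactly, so $x(u)-\mathbf{c}=r(u)\,u$. Since $|u|=1$ this yields $\rho(u)=r(u)^2=(1+\varepsilon g(u))^2$ with no approximation. Convexity (indeed star-shapedness, which is all that being a body requires) forces $r>0$ on $S^2$. So $\rho=\Phi\circ g$ with $\Phi(t)=(1+\varepsilon t)^2$, and $\Phi'(t)=2\varepsilon(1+\varepsilon t)>0$ on the range of $g$ when $\varepsilon>0$. The chain rule gives $d\rho=\Phi'(g)\,dg$, so the two functions have the same critical points.

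At a critical point $u_0$ the Hessian is $\nabla^2\rho=\Phi'(g)\nabla^2 g+\Phi''(g)\,dg\otimes dg=\Phi'(g(u_0))\,\nabla^2 g(u_0)$. The Hessian is thus a positive multiple of that of $g$, and nondegenerate critical points keep their Morse type.

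For degenerate critical points, notably the poles where $P_\ell^m$ vanishes to order $m$ and one expects monkey saddles, I would argue through the gradient field instead. The field $\nabla\rho=\Phi'(g)\nabla g$ is a positive scalar multiple of $\nabla g$. On a small circle around an isolated critical point the two fields are homotopic through nonvanishing fields, so their indices agree. Local extremality also transfers, since $\Phi$ is strictly increasing: minima go to minima and maxima to maxima. This also fixes the stability correspondence, with minima of $g$ stable and maxima unstable. If $\varepsilon<0$, replace $g$ by $-g$, which is again a harmonic of the same type.

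For part (ii), Remark~\ref{rem:m1centroid} gives $\mathbf{c}=O(\varepsilon^3)$; at minimum $\mathbf{c}=O(\varepsilon^2)$ by Lemma~\ref{lem:centroid}. Expanding,
\[
\rho=r^2-2r\,u\cdot\mathbf{c}+|\mathbf{c}|^2=1+2\varepsilon g+\varepsilon^2 h_\varepsilon ,
\]
where $h_\varepsilon$ is smooth and bounded in $C^2$ uniformly in small $\varepsilon$. The bound holds because $\mathbf{c}(\varepsilon)$ is smooth in $\varepsilon$. Then $(\rho-1)/(2\varepsilon)=g+\varepsilon\tilde h_\varepsilon$ is a $C^2$-small perturbation of $g$.

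The count is stable provided all critical points of $g=Y_\ell^1$ are nondegenerate. For $m=1$ the poles have $P_\ell^1$ vanishing to first order, so they are not critical points at all. This nondegeneracy is where I expect the main work. I would check it from the separable form: critical points occur at $\cos\phi=0$ with $P_\ell^1(\cos\theta)=0$, or at $\sin\phi=0$ with $\frac{d}{d\theta}P_\ell^1(\cos\theta)=0$. At each of these the Hessian is diagonal in $(\theta,\phi)$, with entries built from $P$ and $P''$, or from $P'$ and $m^2P$. These entries are nonzero because associated Legendre functions have simple zeros, and $P$ and $P'$ share no zeros in the interior by the Sturm--Liouville ODE.

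Given nondegeneracy, standard Morse stability applies: a $C^2$-small perturbation of a Morse function on a compact manifold has, by the implicit function theorem, exactly one critical point near each original one, of the same index. Outside small neighbourhoods $|\nabla g|$ is bounded below, so no new critical points appear there. This yields the threshold on $\varepsilon$.
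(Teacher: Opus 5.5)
Your route is the paper's. For $m\ge2$, $\mathbf{c}=\mathbf{0}$ makes $\rho=\Phi\circ g$ with $\Phi$ strictly increasing. Your Hessian identity and the homotopy $\big((1-t)+t\,\Phi'(g)\big)\nabla g$ for the index at the degenerate poles are more explicit than the paper's one-line appeal. For $m=1$ you expand $\rho$ and use persistence of nondegenerate critical points. Your lower bound on $\lvert\nabla g\rvert$ away from the critical set supplies the ``no new critical points'' step that the paper leaves implicit.

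The gap is in the step you flagged yourself: the nondegeneracy of every critical point of $Y_\ell^1$, on which (ii) rests. Your sketch of it does not work, at either family of critical points.

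\emph{The points $\cos\phi=0$, $P_\ell^1(\cos\theta)=0$.} Here the Hessian is not diagonal. With $f(\theta)=P_\ell^1(\cos\theta)$ its entries are $\ddot f\cos\phi$, $-\dot f\sin\phi$ and $-f\cos\phi$. Both diagonal entries vanish, so it equals $\left(\begin{smallmatrix}0&h\\h&0\end{smallmatrix}\right)$ with $h=\mp\dot f\neq0$, which is nondegenerate. A diagonal Hessian with an entry built from $m^2P=0$, as you describe, would be degenerate.

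\emph{The points $\sin\phi=0$, $\dot f=0$.} Here the Hessian is $\operatorname{diag}(\pm\ddot f,\mp f)$. The condition $f\neq0$ does follow from the simplicity of the zeros of $f$. But $\ddot f\neq0$ follows from neither fact you cite. At a critical latitude the Legendre equation only gives $\ddot f=-\big(\ell(\ell+1)-1/\sin^2\theta\big)f$. This would vanish if that latitude sat at the turning point $\sin^2\theta=1/(\ell(\ell+1))$, and nothing you say rules that out.

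The missing ingredient is that the critical points of $P_\ell^m$ are themselves simple. The paper proves this in Proposition~\ref{prop:count}, which its proof of the lemma simply cites, by a counting argument:
\begin{itemize}
\item Write $\tfrac{d}{dx}P_\ell^m=(1-x^2)^{m/2-1}q(x)$, where $q=(1-x^2)w'-mxw$ and $w=d^mP_\ell/dx^m$.
\item The polynomial $q$ has degree exactly $\ell-m+1$; its leading coefficient is $-\ell$ times that of $w$.
\item Rolle applied to the $\ell-m+2$ zeros of $P_\ell^m$ in $[-1,1]$ produces $\ell-m+1$ zeros of $q$ in $(-1,1)$.
\item Hence all of these zeros are simple, and $\ddot f=\sin^2\theta\,\tfrac{d^2}{dx^2}P_\ell^m\neq0$ at every critical latitude.
\end{itemize}
With that fact, and the correct off-diagonal Hessian at the other family, your perturbation argument for (ii) goes through unchanged.
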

\begin{proof}
(i) By Lemma~\ref{lem:centroid}, $\mathbf{c}=\mathbf{0}$ when $m\ge2$, so
$\rho=\lvert r(u)\,u\rvert^2=r(u)^2=(1+\varepsilon g(u))^2$ identically, with no expansion and no remainder.
In the convex range $r>0$, so $\rho=\Phi\circ g$ with $\Phi(s)=(1+\varepsilon s)^2$ strictly increasing on
$s>-1/\varepsilon$. Hence $\nabla\rho=2\varepsilon r\,\nabla g$ vanishes exactly where $\nabla g$ does, and
composition with a strictly increasing diffeomorphism preserves the local type and the Poincar\'e--Hopf index
of every critical point. The correspondence is therefore exact, with no smallness threshold beyond convexity
itself.

(ii) For $m=1$, from \eqref{eq:body}--\eqref{eq:rho} and $\lvert u\rvert=1$,
$\rho=\lvert u+\varepsilon g u-\mathbf{c}\rvert^2=1+2\varepsilon g(u)-2\,u\!\cdot\!\mathbf{c}+O(\varepsilon^2)$,
and $2u\!\cdot\!\mathbf{c}=O(\varepsilon^3)$ by Remark~\ref{rem:m1centroid}, so
$\rho=1+2\varepsilon g+O(\varepsilon^2)$. For $m=1$ every critical point of $g$ is nondegenerate, the poles
being regular (Proposition~\ref{prop:count}); nondegenerate critical points persist and keep their type under
the $O(\varepsilon^2)$ perturbation by the implicit function theorem, and the bijection follows for
$\varepsilon$ below a threshold $\varepsilon_0(\ell,1)$.
\end{proof}

\begin{remark}
The equivalence between static equilibria and critical points of the centroid-to-surface distance $\rho$ is
the reduction underlying the V\'arkonyi--Domokos classification~\cite{varkonyi2006b}; we use it unchanged. What
Lemma~\ref{lem:reduction} adds is its collapse onto the harmonic itself. For $m\ge2$ this collapse is not
asymptotic but an identity: the symmetry of a single tesseral harmonic pins the centroid at the origin
exactly, so $\rho$ is a strictly increasing function of $g$ throughout the convex range, and the equilibrium
problem \emph{is} the critical-point problem for $g$, with no error term to control. This is what makes the
count computable in closed form (Section~\ref{sec:count}), and it is the step that fails once $g$ carries a
degree-one component, i.e. for the explicit near-spherical G\"omb\"oc constructions, where the centroid moves
at order $\varepsilon$.
\end{remark}

% =====================================================================
\section{Critical points of a tesseral harmonic}\label{sec:count}

We use the real harmonic $Y_\ell^m(\theta,\phi)=P_\ell^m(\cos\theta)\cos(m\phi)$ with colatitude
$\theta\in[0,\pi]$, longitude $\phi$, and associated Legendre function $P_\ell^m$. Throughout $1\le m\le\ell$;
the zonal case $m=0$ is axisymmetric, its critical sets are latitude circles, and it is excluded.

Every count below is invariant under rescaling $g\mapsto g/c$ for a constant $c>0$, since this leaves the
critical set and the type of each critical point untouched. The convex range of $\varepsilon$ is not
scale-invariant, however, so wherever a numerical value of $\varepsilon$ appears (Section~\ref{sec:numerics})
we fix the scale by normalizing the perturbation to $\max_{S^2}\lvert g\rvert=1$, which makes $\varepsilon$
the true radial amplitude of the deformation.

\begin{proposition}\label{prop:count}
For $1\le m\le\ell$ the harmonic $Y_\ell^m$ has exactly $m(\ell-m+1)$ local maxima and $m(\ell-m+1)$ local
minima on $S^2$, together with $2m(\ell-m)$ nondegenerate saddles and the two poles, which are critical of
index $1-m$ for $m\ge2$ and regular, non-critical, for $m=1$. For $m=2$ a pole is an ordinary nondegenerate
saddle (index $-1$); for $m\ge3$ it is a degenerate monkey saddle. The Poincar\'e--Hopf balance holds:
\[
2m(\ell-m+1)\;-\;2m(\ell-m)\;+\;2(1-m)\;=\;2=\chi(S^2).
\]
\end{proposition}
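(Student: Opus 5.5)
The plan is to exploit separability. Write $g=F(\theta)\cos(m\phi)$ with $F(\theta)=P_\ell^m(\cos\theta)$, and treat the open band $0<\theta<\pi$ separately from the poles. Away from the poles,
\[
\partial_\phi g=-mF(\theta)\sin(m\phi), \qquad \partial_\theta g=F'(\theta)\cos(m\phi).
\]
A critical point therefore needs $F(\theta)=0$ or $\sin(m\phi)=0$.

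First I rule out $F(\theta)=0$. There $F'(\theta)\neq0$, because the zeros of $P_\ell^m$ in $(-1,1)$ are simple; this follows from the Legendre ODE, since a double zero forces $P_\ell^m\equiv0$. So we would also need $\cos(m\phi)=0$. But then $\sin(m\phi)=\pm1$, so $\partial_\phi g=\mp mF=0$ only because $F=0$, while $\partial_\theta g=F'\cos(m\phi)=0$ holds. Such points are critical, and this is a genuine case I must handle. At $(\theta_0,\phi_0)$ with $F(\theta_0)=0$ and $\cos(m\phi_0)=0$, the Hessian has off-diagonal entry $-mF'(\theta_0)\sin(m\phi_0)\neq0$ and zero diagonal entries. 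These points are therefore nondegenerate saddles. Since $P_\ell^m$ has $\ell-m$ interior zeros and $\cos(m\phi)$ has $2m$ zeros in $\phi$, this gives exactly $2m(\ell-m)$ saddles.

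Second, take $\sin(m\phi)=0$, which gives $2m$ meridians $\phi=k\pi/m$ on which $\cos(m\phi)=\pm1$. The condition becomes $F'(\theta)=0$. Between consecutive zeros of $F$, and between the outer zeros and the poles, $F$ has exactly one interior extremum. To justify this I would use the Sturm/ODE argument: at a critical point of $P_\ell^m$ the Legendre equation gives the sign of the second derivative in terms of $F$ and $\ell(\ell+1)-m^2/(1-x^2)$. I also need $F'\neq0$ near the poles. There $F\sim c\sin^m\theta$, which is monotone near each pole, so no extra critical points accumulate there. This yields $\ell-m+1$ values of $\theta$, hence $2m(\ell-m+1)$ points.

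Next I classify these points. The Hessian there is diagonal, with entries $F''(\theta)\cos(m\phi)$ and $-m^2F(\theta)\cos(m\phi)/\sin^2\theta$ (up to metric factors). At an extremum of $F$ in $\theta$, $F''$ has sign opposite to $F$, as the Legendre equation shows once $F'=0$ and $\ell(\ell+1)\sin^2\theta>m^2$ holds at that point. Both entries then have sign $-\operatorname{sgn}(F\cos m\phi)$, so every such point is an extremum. It is a maximum iff $F\cos(m\phi)>0$. Along each meridian the signs of $F$ at successive extrema alternate, and adjacent meridians flip $\cos(m\phi)$. So exactly half the points are maxima, giving $m(\ell-m+1)$ maxima and $m(\ell-m+1)$ minima. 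I expect the main technical obstacle to be the uniqueness of the interior extremum near the poles, together with the sign inequality $\ell(\ell+1)\sin^2\theta>m^2$. I would get both from the ODE: where that inequality fails, $F$ cannot have a critical point. Combined with the interlacing of the zeros of $F$ and $F'$, this gives exactly one extremum per gap.

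Finally I treat the poles. Near $\theta=0$, in local Cartesian coordinates, $g\approx c\,\mathrm{Re}(x+iy)^m$, using $\sin^m\theta\cos(m\phi)\approx r^m\cos(m\phi)$. For $m=1$ this is a linear function with nonzero gradient, so the poles are regular. For $m\ge2$ the gradient vanishes and the index equals the winding number of $\nabla\mathrm{Re}\,z^m=m\bar z^{m-1}$, namely $1-m$. This makes the pole a nondegenerate saddle for $m=2$ and a degenerate monkey saddle for $m\ge3$. The higher-order terms are $O(r^{m+2})$ and do not change the winding. The Poincaré--Hopf sum is then direct arithmetic, and it also serves as a consistency check on the counts.
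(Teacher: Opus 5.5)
Your argument is correct and has the same architecture as the paper's. Both use the separation $Y_\ell^m=f(\theta)\cos(m\phi)$ and the same two families of interior critical points: your $F=0$, $\cos(m\phi)=0$ points are the paper's type (B) saddles, and your meridian points are its type (A). Both then use the same diagonal versus purely off-diagonal Hessian classification, and the same local model $\mathrm{Re}(z^m)$ of index $1-m$ at the poles.

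The genuine difference is in the key lemma: $f$ has exactly $\ell-m+1$ critical latitudes, one nondegenerate extremum per nodal gap.

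The paper proves this algebraically. Writing $P_\ell^m=(1-x^2)^{m/2}u$ with $u=d^mP_\ell/dx^m$, the interior critical points are the zeros of the polynomial $q=(1-x^2)u'-mxu$, which has exact degree $\ell-m+1$. Rolle on the $\ell-m+2$ zeros of $P_\ell^m$ in $[-1,1]$ gives at least that many zeros, and the degree gives at most that many. So all are simple, there is exactly one per gap, and the sign of $\ddot f$ on each hump follows because the single nondegenerate critical point between two zeros must be the maximum of $|f|$ there.

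You instead run a Sturm-type argument on the associated Legendre equation. At a critical point $F''=-\big(\ell(\ell+1)-m^2/\sin^2\theta\big)F$, so in the band $\ell(\ell+1)\sin^2\theta>m^2$ every critical point is a strict maximum of $|F|$. That gives at most one per gap, provided the polar caps are free of critical points.

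Your route reads off the type of each extremum directly, localizes all extrema in $\sin\theta>m/\sqrt{\ell(\ell+1)}$, and rests on the differential equation rather than on polynomiality. The paper's route is shorter, gets count and nondegeneracy in one stroke, and also proves the count of $\ell-m$ interior zeros that you quote. It never meets a turning point.

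The one step you leave as a promise is that the caps $\ell(\ell+1)\sin^2\theta\le m^2$ contain no critical point. This needs care exactly at the turning latitude. There the equation only yields $F''=0$ at a critical point, so it cannot by itself exclude a degenerate one. The local fact that $F\sim c\sin^m\theta$ is monotone near the pole does not reach that far.

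A clean fix is to integrate $(\sin\theta\,F')'=-\big(\ell(\ell+1)-m^2/\sin^2\theta\big)\sin\theta\,F$ from the pole, where $\sin\theta\,F'\to0$. While $F$ keeps the sign it has near the pole, the right-hand side has that sign too and is nonzero off the turning latitude. So $\sin\theta\,F'$ has that sign strictly, and $|F|$ strictly increases away from the pole and can never return to zero. Hence the whole closed cap, turning latitude included, contains neither a zero nor a critical point of $F$.
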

\begin{proof}
Write $x=\cos\theta$ and set $f(\theta)=P_\ell^m(\cos\theta)$, so that $Y_\ell^m=f(\theta)\cos(m\phi)$; all
derivatives below are with respect to $\theta$ unless a variable is named.

\emph{The critical latitudes.} The Legendre function $P_\ell^m(x)=(1-x^2)^{m/2}\,u(x)$ with
$u=d^m P_\ell/dx^m$, a polynomial of degree $\ell-m$ with exactly $\ell-m$ simple zeros in $(-1,1)$ (apply
Rolle $m$ times to the $\ell$ simple zeros of $P_\ell$); and $P_\ell^m(\pm1)=0$ for $m\ge1$~\cite{hobson,szego}.
Differentiating,
\[
\frac{d}{dx}P_\ell^m = (1-x^2)^{\frac{m}{2}-1}\,q(x), \qquad q(x):=(1-x^2)u'(x)-m\,x\,u(x),
\]
so on $(-1,1)$ the critical points of $P_\ell^m$ are exactly the zeros of $q$. Now $q$ has degree
\emph{exactly} $\ell-m+1$: the two degree-$(\ell-m+1)$ contributions do not cancel, their leading coefficients
summing to $-\ell$ times that of $u$. Meanwhile $P_\ell^m$ vanishes at the $\ell-m$ interior zeros of $u$ and
at $x=\pm1$, giving $\ell-m+2$ zeros in $[-1,1]$, so Rolle supplies \emph{at least} $\ell-m+1$ zeros of $q$ in
$(-1,1)$. A polynomial of degree $\ell-m+1$ with $\ell-m+1$ zeros there has no others and all are simple.
Hence $f$ has exactly $\ell-m+1$ critical latitudes $\theta_1,\dots,\theta_{\ell-m+1}$, each nondegenerate
($\ddot f\ne0$), one strictly inside each interval between consecutive zeros of $f$ (the poles counted as
zeros), and $f$ alternates in sign from one such interval to the next.

\emph{The interior critical points.} Setting $\partial_\phi Y_\ell^m=-m\,f(\theta)\sin(m\phi)=0$ and
$\partial_\theta Y_\ell^m=\dot f(\theta)\cos(m\phi)=0$ simultaneously requires
$[\,\sin(m\phi)=0$ or $f=0\,]$ and $[\,\dot f=0$ or $\cos(m\phi)=0\,]$. Of the four combinations, $\sin(m\phi)=
\cos(m\phi)=0$ is impossible, and $f=\dot f=0$ is impossible because the zeros of $f$ in $(0,\pi)$ are simple.
The remaining two are:
\begin{enumerate}[nosep]
\item[(A)] $\sin(m\phi)=0$ and $\theta=\theta_j$: the $2m$ longitudes $\phi=k\pi/m$ at each critical latitude.
\item[(B)] $\cos(m\phi)=0$ and $f(\theta)=0$: the $2m$ longitudes $\phi=(2k{+}1)\pi/2m$ at each of
the $\ell-m$ interior zero-latitudes.
\end{enumerate}
\emph{Classification.} The Hessian of $Y_\ell^m$ in the coordinates $(\theta,\phi)$ has entries
\[
\partial_\theta^2 Y=\ddot f\cos(m\phi), \qquad
\partial_\theta\partial_\phi Y=-m\,\dot f\sin(m\phi), \qquad
\partial_\phi^2 Y=-m^2 f\cos(m\phi).
\]
At a critical point the Christoffel terms drop out, so this coordinate Hessian is the covariant one as a
bilinear form; raising an index rescales the $\phi\phi$ entry by $g^{\phi\phi}=1/\sin^2\theta>0$, which away
from the poles cannot change a signature. Types may therefore be read off the display.

At a type-(B) point $f=0$ and $\cos(m\phi)=0$, so $Y_\ell^m=0$ and \emph{both} diagonal entries vanish. The
Hessian is purely off-diagonal, $\left(\begin{smallmatrix}0&h\\h&0\end{smallmatrix}\right)$ with
$h=-m\,\dot f\sin(m\phi)\ne0$ (the zeros of $f$ are simple, so $\dot f\ne0$, and $\sin(m\phi)=\pm1$), giving
eigenvalues $\pm\lvert h\rvert$. All $2m(\ell-m)$ of these points are therefore saddles.

At a type-(A) point $\sin(m\phi)=0$ and $\dot f=0$, so the off-diagonal entry vanishes and the Hessian is
diagonal, $\operatorname{diag}\big(\ddot f\cos(m\phi),\,-m^2 f\cos(m\phi)\big)$, evaluated at a latitudinal
extremum of $f$. On a positive hump ($f>0$ at a local max, so $\ddot f<0$) the $m$ longitudes with
$\cos(m\phi)=+1$ make both entries negative, a maximum, and the $m$ with $\cos(m\phi)=-1$ make both positive, a
minimum; on a negative hump the assignment reverses. Each of the $\ell-m+1$ humps thus contributes $m$ maxima
and $m$ minima, giving $m(\ell-m+1)$ of each, all nondegenerate since $\ddot f\ne0$ and $f\ne0$ there. Near a pole, in a local conformal chart $z$ centred there, $Y_\ell^m\sim\theta^m\cos m\phi=\mathrm{Re}(z^m)$
up to a positive factor. For $m=1$ this is regular; for $m=2$ it is $\mathrm{Re}(z^2)$, an ordinary
nondegenerate saddle of index $-1$; for $m\ge3$ it is a degenerate monkey saddle. In every case
$\mathrm{Re}(z^m)$ has index $1-m$ at the origin. Summing indices $(+1)$ for maxima and minima, $(-1)$ for the
ordinary saddles, $(1-m)$ for each pole yields $2$.
\end{proof}

% =====================================================================
\section{The equilibrium-class theorem}

\begin{theorem}\label{thm:main}
Let $B_\varepsilon$ be the homogeneous body \eqref{eq:body} with $g=Y_\ell^m$, $\ell\ge2$, $1\le m\le\ell$.
For $m\ge2$ and \emph{every} $\varepsilon$ in the convex range, and for $m=1$ and all sufficiently small
$\varepsilon>0$, $B_\varepsilon$ has exactly
\[
S \;=\; U \;=\; m(\ell-m+1)
\]
stable and unstable equilibria. Its remaining critical points are the $2m(\ell-m)$ ordinary saddles of
Proposition~\ref{prop:count} together with the two poles, which persist \emph{unsplit} as critical points of
index $1-m$ (ordinary saddles for $m=2$, monkey saddles for $m\ge3$, and regular non-critical points for
$m=1$). The Poincar\'e--Hopf balance closes in index form,
\[
\underbrace{2m(\ell-m+1)}_{\text{extrema}}\;-\;\underbrace{2m(\ell-m)}_{\text{ordinary saddles}}\;+\;
\underbrace{2(1-m)}_{\text{poles}}\;=\;2\;=\;\chi(S^2).
\]
\end{theorem}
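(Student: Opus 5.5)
The theorem follows by combining Lemma~\ref{lem:reduction} with Proposition~\ref{prop:count}, and I would split the argument by whether $m\ge2$ or $m=1$. Throughout, the identification of equilibria with critical points of $\rho$ is taken from~\cite{varkonyi2006b}: minima of $\rho$ are stable equilibria, maxima are unstable, and saddles are saddle-type.

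\emph{Case $m\ge2$.} By Lemma~\ref{lem:centroid}, $\mathbf{c}=\mathbf{0}$ exactly. By Lemma~\ref{lem:reduction}(i), this gives $\rho=\Phi\circ g$ with $\Phi(s)=(1+\varepsilon s)^2$, which is strictly increasing on the range of $g$ whenever $r>0$. The convex range lies inside $\{r>0\}$, so this holds for every admissible $\varepsilon$. Consequently:
\begin{itemize}
\item the critical set of $\rho$ equals that of $g$, since $\nabla\rho=2\varepsilon r\nabla g$;
\item each local max or min of $g$ is a local max or min of $\rho$ of the same kind;
\item the index is preserved at every critical point, including the degenerate poles.
\end{itemize}
For the last point I would argue directly. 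Near an isolated critical point $p$, the gradient fields $\nabla\rho$ and $\nabla g$ differ by the positive scalar factor $2\varepsilon r$. The straight-line homotopy between them therefore never vanishes on a small circle around $p$, so the two fields have the same winding number there.

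Proposition~\ref{prop:count} then supplies everything. The body has $m(\ell-m+1)$ minima of $g$, which are the stable equilibria, and $m(\ell-m+1)$ maxima, which are the unstable ones. It also has $2m(\ell-m)$ nondegenerate saddles, and two poles of index $1-m$. The poles stay unsplit because they coincide with critical points of $g$ exactly, with no perturbation involved. The index balance is the arithmetic already displayed in Proposition~\ref{prop:count}.

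\emph{Case $m=1$.} By Proposition~\ref{prop:count}, all critical points of $g$ are nondegenerate and the poles are regular. Lemma~\ref{lem:reduction}(ii) gives $(\rho-1)/(2\varepsilon)=g+O(\varepsilon)$ in $C^2$. The step I expect to need the most care is checking that this error really is uniform in $C^2$ on all of $S^2$, including near the poles. I would get this from smooth dependence of $\mathbf{c}(\varepsilon)$ and $r$ on $\varepsilon$, together with compactness of $S^2$.

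With that uniformity, a standard stability argument applies. Choose disjoint small disks around the finitely many critical points of $g$. On the complement, $\lvert\nabla g\rvert$ is bounded below by a positive constant, so for small $\varepsilon$ the field $\nabla\rho/(2\varepsilon)$ has no zeros there. Inside each disk, the implicit function theorem, using $\det\mathrm{Hess}\,g\ne0$, gives exactly one zero. The Hessian of that zero is $C^0$-close to $\mathrm{Hess}\,g$, so its signature is unchanged. The counts therefore transfer exactly, and the poles remain regular. The pole index is $1-m=0$, which is consistent with the poles being non-critical, and the balance reads $2\ell-2(\ell-1)+0=2$.

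In short, no new estimates are needed beyond the two cited results. The only genuine points to check are the index preservation at degenerate poles for $m\ge2$, which the homotopy argument handles, and the uniform $C^2$ control for $m=1$.
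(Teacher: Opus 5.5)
Your proposal is correct and is essentially the paper's proof: it combines Lemma~\ref{lem:reduction} (the exact identity $\rho=(1+\varepsilon g)^2$ for $m\ge2$, and persistence of nondegenerate critical points for $m=1$) with Proposition~\ref{prop:count}; your winding-number argument at the degenerate poles and your uniform $C^2$ control of $(\rho-1)/(2\varepsilon)$ for $m=1$ just spell out what the paper asserts inside Lemma~\ref{lem:reduction}. (Both you and the paper tacitly take $\varepsilon>0$. For $\varepsilon<0$ the factor $2\varepsilon r$ is negative, which swaps minima and maxima, but this leaves $S=U$ unchanged and, since we are in dimension two, leaves every index unchanged as well.)
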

\begin{proof}
Immediate from Lemma~\ref{lem:reduction} and Proposition~\ref{prop:count}: the equilibria of $B_\varepsilon$
are the critical points of $g=Y_\ell^m$ with their types and indices, stable corresponding to minima
($m(\ell-m+1)$) and unstable to maxima ($m(\ell-m+1)$). For $m\ge2$ Lemma~\ref{lem:reduction}(i) is an exact
identity, so no critical point, polar or otherwise, is created, destroyed or split at any $\varepsilon$ in the
convex range, and the index sum is verbatim that of Proposition~\ref{prop:count}. For $m=1$ every critical
point is nondegenerate and persists by Lemma~\ref{lem:reduction}(ii).
\end{proof}

\begin{remark}\label{rem:nosplit}
The polar critical points are degenerate for $m\ge3$, so it is natural to ask whether they split into ordinary
saddles on the actual body, which would alter $H$. They do not. By Lemma~\ref{lem:centroid} there is no
centroid shift at any order for $m\ge2$, hence $\rho=(1+\varepsilon g)^2$ exactly and there is no perturbation
available to unfold them: the monkey saddles persist for every $\varepsilon$ in the convex range. This is also
what symmetry demands. The body is exactly invariant under $\phi\mapsto\phi+2\pi/m$, a rotation fixing only
the poles and acting freely nearby, so critical points appearing off a pole must arrive in full orbits of size
$m$; a splitting into $m-1$ ordinary saddles is therefore not equivariantly realizable for $m\ge3$. Were the
symmetry broken by an external perturbation, the generic $\mathbb{Z}_m$-equivariant unfolding of
$\mathrm{Re}(z^m)$ would yield one extremum together with a ring of $m$ saddles, of total index $1-m$ as it
must be, which would change $S$ or $U$ by one. It is the exactness of Lemma~\ref{lem:reduction}(i), not a
genericity assumption, that rules this out.
\end{remark}

% =====================================================================
\section{Three consequences}\label{sec:consequences}

\begin{corollary}[Diagonal reachability]\label{cor:diag}
A single harmonic perturbation realizes only equilibrium classes on the diagonal $S=U=m(\ell-m+1)$ of the
$(S,U)$ lattice. The reachable values are $\{\,m(\ell-m+1): \ell\ge2,\,1\le m\le\ell\,\}$, i.e.
$2,3,4,5,6,\dots$ (every integer $\ge2$ occurs; e.g. $n$ via $Y_n^1$).
\end{corollary}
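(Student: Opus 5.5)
The corollary is a direct read-off from Theorem~\ref{thm:main}. I will split it into two parts: first that every single-harmonic body lies on the diagonal, and second that the set of values $m(\ell-m+1)$ is exactly the integers $\ge2$.

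For the diagonal claim, Theorem~\ref{thm:main} gives $S=U=m(\ell-m+1)$ for every admissible pair $(\ell,m)$, in the stated amplitude range. The equality $S=U$ comes from Proposition~\ref{prop:count}. In its proof, each of the $\ell-m+1$ latitudinal humps of $f$ contributes $m$ maxima and $m$ minima. Lemma~\ref{lem:reduction} then transfers these to unstable and stable equilibria respectively. So no off-diagonal class can arise, and the class is determined by the single integer $n=m(\ell-m+1)$.

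For the set of values, I will argue both inclusions.

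\emph{Lower bound.} Since $1\le m\le\ell$, both factors satisfy $m\ge1$ and $\ell-m+1\ge1$, so $n\ge1$. If $n=1$ we would need $m=1$ and $\ell-m+1=1$, i.e.\ $\ell=1$, which is excluded by $\ell\ge2$. Hence $n\ge2$.

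\emph{Surjectivity.} Taking $m=1$ gives $n=1\cdot\ell=\ell$, so every $n\ge2$ is realized by $Y_n^1$. Other representations exist, for instance $Y_\ell^\ell$ gives $n=\ell$ and $Y_3^2$ gives $4$. These do not matter for the set, but I may note them.

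There is no real obstacle here. The one point needing care is the $m=1$ case: there the theorem holds only for sufficiently small $\varepsilon$. So the realizability statement for $n$ via $Y_n^1$ should be phrased as holding for small amplitude, which suffices because the claim is about existence of a realizing body. Alternatively, I can realize each $n\ge2$ with $m\ge2$ when possible (e.g.\ $Y_n^n$ for $n\ge2$, since $m=n\ge2$ and $n(n-n+1)=n$). This gives the stronger statement that every diagonal class $n\ge2$ is realized for the entire convex range, and I would remark on this.
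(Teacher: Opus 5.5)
Your proposal is correct and matches the paper's implicit argument: $S=U$ is read off from Theorem~\ref{thm:main}, the bound $m(\ell-m+1)\ge2$ is the same factor argument the paper uses for Corollary~\ref{cor:nomono}, and $Y_n^1$ witnesses that every $n\ge2$ is reached. Your extra observation that $Y_n^n$ (with $m=n\ge2$) also gives $n$ is a worthwhile strengthening, because it realizes every diagonal class over the entire convex amplitude range, whereas $Y_n^1$ realizes it only for small $\varepsilon$.
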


\begin{figure}[h]
\centering
\includegraphics[width=0.62\textwidth]{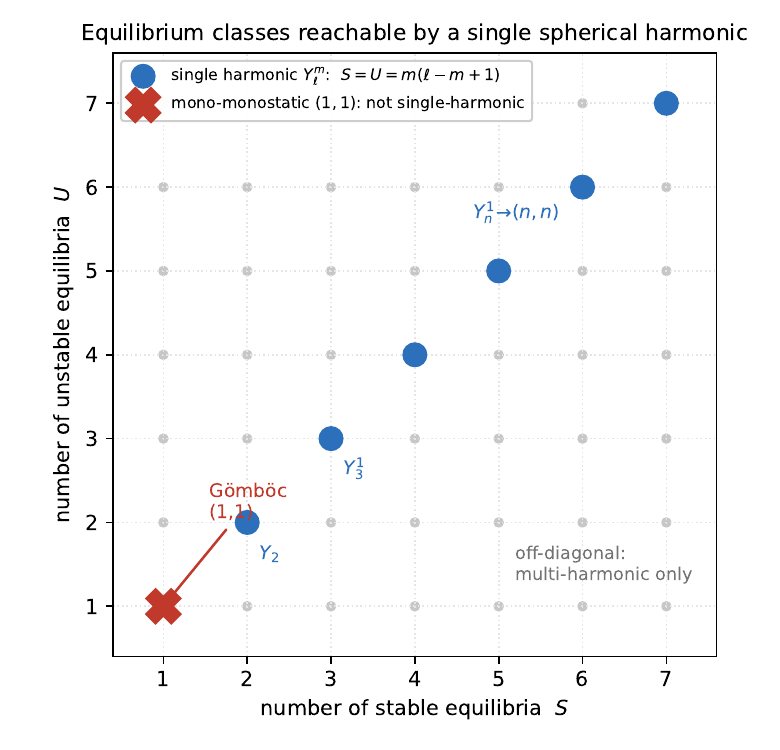}
\caption{The equilibrium-class lattice $(S,U)$. A single spherical harmonic $Y_\ell^m$ realizes only the
diagonal values $S=U=m(\ell-m+1)\in\{2,3,4,\dots\}$ (blue); every integer $\ge2$ occurs, e.g. $Y_n^1$ gives
$(n,n)$. The mono-monostatic corner $(1,1)$ (red) and every off-diagonal class require multi-harmonic
content.}
\label{fig:lattice}
\end{figure}

\begin{corollary}[No single harmonic is mono-monostatic]\label{cor:nomono}
$m(\ell-m+1)=1$ forces $m=1$ and $\ell=1$, a degree-one perturbation (a rigid translation, not a shape
change). Hence no single spherical harmonic of degree $\ell\ge2$ produces a mono-monostatic body.
\end{corollary}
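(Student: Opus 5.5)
The plan is to reduce the corollary to a short arithmetic statement about the class map of Theorem~\ref{thm:main}. By that theorem, a body $B_\varepsilon$ built from a single harmonic $g=Y_\ell^m$ with $\ell\ge2$, $1\le m\le\ell$ has $S=U=m(\ell-m+1)$. This holds for every $\varepsilon$ in the convex range when $m\ge2$, and for all sufficiently small $\varepsilon$ when $m=1$. A mono-monostatic body would need $S=U=1$, so it suffices to show that $m(\ell-m+1)=1$ has no solution with $\ell\ge2$ and $1\le m\le\ell$. The zonal case $m=0$ is excluded from the setup, so I will only remark on it rather than treat it.

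For the arithmetic, both factors are positive integers: $m\ge1$ by hypothesis, and $\ell-m+1\ge1$ because $m\le\ell$. A product of positive integers equals $1$ only if each factor equals $1$. This forces $m=1$ and $\ell-m+1=1$, hence $\ell=m=1$, which contradicts $\ell\ge2$. Put differently, the minimum of $m(\ell-m+1)$ over the admissible range is $\ell\ge2$, attained at $m=1$ and $m=\ell$. So for $\ell\ge2$ one always has $S=U\ge2$.

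Next I will justify the parenthetical claim about $\ell=1$. The degree-one harmonics are the coordinate functions $u\cdot a$. To first order, $r(u)=1+\varepsilon\,u\cdot a$ is the unit sphere translated by $\varepsilon a$. The centroid moves with it at order $\varepsilon$, as noted in the remark following Lemma~\ref{lem:centroid}. So no genuine shape change occurs at leading order, and the case lies outside the theorem anyway.

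The only delicate point is that the statement is about bodies, not harmonics, and for $m=1$ Theorem~\ref{thm:main} gives the count only below a threshold $\varepsilon_0$. I will therefore state the conclusion with the same quantifiers: for $m\ge2$ it holds throughout the convex range, and for $m=1$ for small $\varepsilon$. The main obstacle is this quantifier, not the arithmetic, and it is already settled by Lemma~\ref{lem:reduction}.
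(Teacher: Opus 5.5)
Your argument is correct and matches the paper's. The paper also reads the corollary off Theorem~\ref{thm:main}: since $m\ge1$ and $\ell-m+1\ge1$ are positive integers, $m(\ell-m+1)=1$ forces $m=\ell=1$, which contradicts $\ell\ge2$; your sharper bound $m(\ell-m+1)-\ell=(m-1)(\ell-m)\ge0$ is a nice extra. You are also right to carry over the small-$\varepsilon$ restriction for $m=1$; this is more careful than the corollary's unqualified wording, which inherits that restriction from Theorem~\ref{thm:main} without saying so.
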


\begin{corollary}[Parity obstruction]\label{cor:parity}
If the perturbation $g$ is a sum of even-degree harmonics, then $g(-u)=g(u)$, the body $B_\varepsilon$ is
centrally symmetric, and its equilibria occur in antipodal pairs; consequently $S$ and $U$ are even, and
$B_\varepsilon$ cannot be mono-monostatic. Mono-monostaticity therefore requires odd-degree content.
\end{corollary}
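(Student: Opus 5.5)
The plan has three steps: show that even-degree content makes the body centrally symmetric, transfer that symmetry to $\rho$, and then show that the antipodal map pairs off equilibria with no fixed points.

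\emph{Step 1: symmetry of $g$ and the body.} A harmonic of degree $k$ is the restriction to $S^2$ of a homogeneous harmonic polynomial of degree $k$. Hence $Y_k(-u)=(-1)^kY_k(u)$, and a sum of even-degree terms satisfies $g(-u)=g(u)$. Then $r(-u)=r(u)$, so $x(-u)=-x(u)$ and the region $B_\varepsilon$ is invariant under $x\mapsto -x$. The first moment is odd under this map, so the centroid is fixed: $\mathbf c=\mathbf 0$. Note that this is exact and uses no single-harmonic structure, unlike Lemma~\ref{lem:centroid}.

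\emph{Step 2: symmetry of $\rho$.} With $\mathbf c=\mathbf 0$ we get $\rho(u)=r(u)^2$, so $\rho(-u)=\rho(u)$. The antipodal map $A:u\mapsto -u$ is an isometry of $S^2$ and $\rho\circ A=\rho$. Therefore $A$ maps critical points of $\rho$ to critical points and preserves the Hessian signature. In particular it maps local minima to local minima and local maxima to local maxima. By the reduction of Section~\ref{sec:setup}, which identifies equilibria and their stability type with the critical points of $\rho$, the stable and unstable equilibria are each permuted by $A$.

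\emph{Step 3: parity and the conclusion.} $A$ has no fixed points on $S^2$ and is an involution. Hence its orbits on the set of stable equilibria all have size exactly $2$, so that set has even cardinality, and likewise for the unstable set. Here I assume $S$ and $U$ are finite, which is the nondegenerate setting implicit throughout the paper; I would state this explicitly, and otherwise $S,U$ are not defined as numbers. Evenness together with $S,U\ge1$ (a body always has a global minimum and a global maximum of $\rho$) gives $S,U\ge2$, so $(S,U)\neq(1,1)$.

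For the final sentence, take the contrapositive. A mono-monostatic perturbation cannot consist of even-degree harmonics only, so it must contain odd-degree content. Since the degree-one part is merely a translation and is excluded, that content is of odd degree $\ge3$, or degree one together with other terms.

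None of the steps is a real obstacle. The only delicate point is Step 1's claim that $\mathbf c=\mathbf 0$ exactly. This is needed so that $\rho$, and not merely $g$, is antipodally symmetric, and it follows at once from central symmetry of the solid rather than from any expansion in $\varepsilon$.
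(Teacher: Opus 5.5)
Your proof is correct and follows the paper's own argument. Evenness of $g$ gives $r(-u)=r(u)$, central symmetry of the solid forces $\mathbf{c}=\mathbf{0}$ so that $\rho(-u)=\rho(u)$, and the antipodal map is then a fixed-point-free involution preserving stability type, so $S$ and $U$ are even; your explicit finiteness hypothesis matches the paper's own ``isolated equilibria'' caveat, and your side remark that degree-one content is ``merely a translation'' holds only to first order in $\varepsilon$, though nothing in the corollary depends on it.
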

\begin{proof}
A spherical harmonic of degree $\ell$ satisfies $Y_\ell^m(-u)=(-1)^\ell Y_\ell^m(u)$, so a sum of even-degree
harmonics is even, whence $r(-u)=r(u)$ and $\mathbf{c}=0$. Then $\rho(-u)=\rho(u)$, so the antipodal map is a
fixed-point-free involution of the equilibrium set preserving stability type; equilibria pair up and $S,U$
are even. A mono-monostatic body has $S=U=1$, odd, a contradiction.
\end{proof}

Corollary~\ref{cor:parity} is stated here for its harmonic reading, but its proof uses neither
Theorem~\ref{thm:main} nor the single-harmonic enumeration: it holds verbatim for \emph{any} centrally
symmetric homogeneous convex body with isolated equilibria, by the same antipodal-involution argument. We keep
the corollary framing because that is how it is used below, while noting the wider scope.

Its qualitative half is the known fact that centrally symmetric bodies are not mono-monostatic. That fact is an
immediate consequence of Theorem~2(i) of~\cite{domokos2023symmetry}: the
symmetry group of a mono-monostatic body must fix its unique stable and unique unstable point, hence fixes a
line, so the antipodal map cannot belong to it. What Corollary~\ref{cor:parity} adds is the quantitative
refinement that $S$ and $U$ are not merely unequal to $1$ but \emph{even}. Together,
Corollaries~\ref{cor:diag}--\ref{cor:parity} locate mono-monostaticity sharply: it is unreachable by a single
mode, excluded for even (centrally symmetric) shapes, and hence an intrinsically multi-harmonic,
odd-parity phenomenon.

% =====================================================================
\section{Numerical confirmation}\label{sec:numerics}

We confirmed Theorem~\ref{thm:main} in two independent ways. (i) \emph{Direct field enumeration}: by
Lemma~\ref{lem:reduction} the equilibrium set is the critical set of $g=Y_\ell^m$ (exactly, for $m\ge2$), so we
counted the local minima, maxima and saddles of $Y_\ell^m$ on a refined icosphere of $S^2$ directions; for a
single harmonic these critical points are well separated and the count is stable under grid refinement. (ii)
\emph{Body solver}: independently, we located equilibria as critical points of $\rho$ in \eqref{eq:rho} on the
body itself by Newton iteration on $S^2$ from a dense seed set, deduplicating by direction and classifying by
sampling $\rho$ on a small geodesic circle. The sectoral prediction $S=U=m$ was \emph{registered before} the
runs; the tesseral cases then revealed the general $m(\ell-m+1)$ law. The two methods agree with
Theorem~\ref{thm:main} and satisfy $S-H+U=2$. As a check that the enumeration is \emph{complete} and not merely
consistent, we also ran an exhaustive interior critical-point search on a fine product grid for a representative
case ($Y_6^2$): it returned exactly $36$ interior critical points, $10$ maxima, $10$ minima and $16$ ordinary
saddles, with no others, matching the interior enumeration of Proposition~\ref{prop:count} term by term
($m(\ell-m+1)=10$ of each extremum type and $2m(\ell-m)=16$ ordinary saddles), the two poles being accounted
separately.

\begin{table}[h]
\centering
\caption{Predicted $m(\ell-m+1)$ versus measured $(S,U)$ for single-harmonic bodies, at the amplitude
$\varepsilon$ actually used (perturbation normalized to $\max\lvert g\rvert=1$). Agreement is exact.
$\varepsilon_0(\ell,m)$ is the strict-convexity threshold computed independently (Section~\ref{sec:numerics});
every body listed sits inside its convex range, with the stated headroom. The high-order sectoral rows
($Y_4^4$, $Y_5^5$) are from the direct field enumeration, which is resolution-robust for these well-separated
critical sets.}
\begin{tabular}{lcccccc}
\toprule
$Y_\ell^m$ & $\varepsilon$ & $\varepsilon_0(\ell,m)$ & headroom & $m(\ell-m+1)$ & measured $S$ & measured $U$ \\
\midrule
$Y_2^2$ & 0.020 & 0.200 & $10.0\times$ & 2 & 2 & 2 \\
$Y_3^1$ & 0.025 & 0.108 & $4.3\times$  & 3 & 3 & 3 \\
$Y_3^2$ & 0.025 & 0.136 & $5.4\times$  & 4 & 4 & 4 \\
$Y_3^3$ & 0.030 & 0.100 & $3.3\times$  & 3 & 3 & 3 \\
$Y_4^2$ & 0.025 & 0.076 & $3.0\times$  & 6 & 6 & 6 \\
$Y_4^4$ & 0.020 & 0.059 & $2.9\times$  & 4 & 4 & 4 \\
$Y_5^5$ & 0.020 & 0.038 & $1.9\times$  & 5 & 5 & 5 \\
$Y_3^1$ & 0.001 & 0.108 & $108\times$  & 3 & 3 & 3 \\
\bottomrule
\end{tabular}
\end{table}

Because the reduction is exact for $m\ge2$ (Lemma~\ref{lem:reduction}(i)), the only precondition these bodies
must satisfy is strict convexity. We verify it rather than assume it: for each row we compute the minimum
Gaussian curvature $K_{\min}$ from the first and second fundamental forms on a dense $(\theta,\phi)$ grid and
locate $\varepsilon_0(\ell,m)$, the largest amplitude with $K_{\min}>0$, by bisection. All eight bodies pass
with at least $1.9\times$ headroom, the tightest being $Y_5^5$. (We use the curvature test, not a
volume-to-hull ratio, which is satisfied by bodies carrying small regions of negative curvature.) The grid
excludes a neighborhood of the poles, where the $(\theta,\phi)$ chart is singular; this hides nothing, since
$P_\ell^m(\pm1)=0$ for $m\ge1$ means the perturbation vanishes at the poles and the boundary is there $C^2$-close
to the unit sphere, hence strictly convex. The
$\varepsilon=10^{-3}$ row confirms that genuinely shallow equilibria are resolved, not missed. We note the
practical limitation, relevant to Section~\ref{sec:certified}, that the body solver converges reliably only when
equilibria are well separated; for the high-order sectoral harmonics, whose extrema crowd near a flat polar
cap, it must be replaced by the direct field enumeration.

% =====================================================================
\section{A certified counter for the mono-monostatic regime}\label{sec:certified}

By Corollaries~\ref{cor:nomono}--\ref{cor:parity}, mono-monostatic bodies are multi-harmonic and
near-spherical, the regime where Lemma~\ref{lem:reduction} no longer applies (the centroid may shift at first
order) and where naive counting is unreliable. We give a certified counter for the equilibria of an
analytically presented near-spherical body, i.e. for the critical points of $\rho$ in \eqref{eq:rho}.

\paragraph{Method.} We evaluate $\rho(\theta,\phi)$ as an \emph{interval order-two jet}, carrying its value,
gradient and Hessian as machine intervals; the gradient yields $F=\tfrac12\nabla\rho$ and the Hessian yields
both the Jacobian for root certification and the classifying form. Over a box $B\subset S^2$ we apply the
\emph{Krawczyk operator}~\cite{neumaier} (for a recent use of the same test in certified surface computation
see~\cite{krawczyksurf}): with $\check{x}$ the midpoint of $B$, $C\approx J(\check{x})^{-1}$ a preconditioner,
and $J(B)$ the interval Jacobian,
\[
K(B) \;=\; \check{x} - C\,F(\check{x}) + \big(I - C\,J(B)\big)(B-\check{x}).
\]
If $K(B)\subset\operatorname{int}B$ the box contains a \emph{unique} equilibrium (existence and uniqueness
certified); if $K(B)\cap B=\varnothing$ it contains none; otherwise $B$ is subdivided. A certified
equilibrium is classified \emph{rigorously} from the interval Hessian of $\rho$ over its box: positive
definite $\Rightarrow$ stable, negative definite $\Rightarrow$ unstable, indefinite $\Rightarrow$ saddle. The
polar coordinate singularity is removed by re-certifying the two polar caps in stereographic charts, in which
the bodies are smooth (the $\sin\theta\to0$ degeneracy is a chart artifact: e.g. $\sin\theta\cos\phi=u_x$). The
Poincar\'e--Hopf identity $S-H+U=2$ is verified as a global closure check, and the final count is accepted only
when no box remains undecided.

Because interval exclusion discards only boxes \emph{proved} free of critical points, the method cannot
undercount; uniqueness certification prevents overcounting. It is thus not subject to the failure modes of
seed-based and grid-based counters. Two caveats keep this honest. The bulk and polar charts overlap in a thin
annulus, so the charts are composed by summation only after checking that no certified root falls in the
overlap. And the implementation deduplicates recorded roots by angular proximity; since Krawczyk boxes are
disjoint and each certifies a distinct root, that guard is redundant, and we verify per run that it removed
nothing by confirming the recorded root total equals $S+U+H$ (it does: $10=3+3+4$ for $Y_3^1$, $20=6+6+8$ in
the bulk for $Y_4^2$, and $2=1+1+0$ for the body above).

\paragraph{Validation.} On single harmonics the certified counter reproduces the law and the full critical
structure of Proposition~\ref{prop:count}: $Y_3^1\mapsto(S,U,H)=(3,3,4)$ with empty caps, and $Y_4^2\mapsto
(6,6,10)$ with exactly one certified saddle in each polar cap (the order-$m$ polar critical points), each with
$S-H+U=2$ and no undecided boxes.

\paragraph{Result.} We apply the counter to the second analytic parameterization of~\cite{sloan2023} (his
Eq.~(16)),
\[
r(\theta,\phi)^4 = 1 + 4\beta\sin\theta\,\cos\!\big(\phi-\eta(\theta)\big), \qquad
\eta(\theta)=\tfrac{3\pi}{2}\big(\cos\theta-\tfrac13\cos^3\theta\big),
\]
at $\beta=0.023$. The amplitude is our choice, not Sloan's published $\beta=0.17$: it places the body inside
the strictly convex sub-regime $\beta\lesssim0.036$ identified in~\cite{couey2026sloan}, where the body is a
legitimate object for the reduction of Section~\ref{sec:setup}. There the counter certifies
\[
(S,U,H) = (1,1,0), \qquad S-H+U=2,
\]
with no undecided boxes across the bulk and both polar caps. Hence this body is \emph{certifiably
mono-monostatic}, which settles by rigorous computation a question that earlier basin-counting left ambiguous:
the drainage-basin oracle of~\cite{couey2026mono} returned a negative verdict on this parameterization, and the
present result corrects it. That oracle's counts on near-spherical bodies of this family drift with mesh
refinement rather than converging, which is precisely why a certified method was needed here.

The same counter certifies the near-spherical Fourier-phase variants of~\cite{couey2026mono} at $(1,1)$, and
certifies one radial-perturbation variant at $(2,3)$. The remaining radial variants are \emph{not} certified:
they terminate with undecided boxes and a Poincar\'e--Hopf residual, and we therefore report no count for them
(see Scope). We draw no inference from these classes about Corollary~\ref{cor:parity}: those bodies are not
centrally symmetric, so the parity obstruction is silent about them.

\paragraph{Scope.} Certification of the most weakly perturbed (near-degenerate) bodies requires finer
subdivision or extended precision; for those the method reports undecided boxes rather than a false count, and
completing them is a matter of computational budget. This is not hypothetical: of the radial variants above,
three terminate undecided and are reported as such rather than counted.

Two limitations are worth stating plainly. First, the centroid enters every box evaluation as a point value
obtained by quadrature, not as a rigorous enclosure; the certificates above are therefore conditional on that
centroid, and replacing it by an interval enclosure throughout would make the counts unconditional. The
equilibria of the bodies certified above are well separated, so we expect this to change nothing, but we have
not run it and do not claim it. Second, the counter has been validated on harmonics with $m\le2$, where the
polar critical points are regular or nondegenerate. It has not been run on the high-order sectoral harmonics
$Y_4^4$ and $Y_5^5$, whose flat polar caps are exactly the regime that defeats the seed-based solver of
Section~\ref{sec:numerics}; extending the certification to them is the natural next test of the method rather
than of the theorem, which Section~\ref{sec:setup} settles for those bodies analytically.

% =====================================================================
\section{Conclusion}

A single spherical-harmonic perturbation of a homogeneous sphere realizes exactly the diagonal equilibrium
class $S=U=m(\ell-m+1)$. This gives an elementary, fully explicit constructive complement to the
V\'arkonyi--Domokos existence theorem, shows that the G\"omb\"oc class $(1,1)$ is unreachable by any single
mode and by any centrally symmetric shape, and isolates mono-monostaticity as an intrinsically
multi-harmonic, odd-parity phenomenon. For that complementary regime the certified counter of
Section~\ref{sec:certified} removes the practical ambiguity in counting near-spherical equilibria, and
certifies concrete mono-monostatic bodies, including a known analytic parameterization. Together the two
results map both ends of the problem: an exact closed form where a single mode dominates, and a certified
decision procedure where many modes conspire.

% =====================================================================
\section*{Data and code availability}
The direct field-enumeration and the hardened critical-point solver, the certified counter of
Section~\ref{sec:certified} (interval jet, Krawczyk test and stereographic polar charts) together with its
saved certificates, the strict-convexity check producing the $\varepsilon_0(\ell,m)$ column of Table~1, the
centroid and polar-structure verification of Section~\ref{sec:setup}, the saved results reproducing Table~1,
and the figure generator are openly available as a deterministic, read-only bundle
(\texttt{harmonic\_equilibrium\_code\_and\_data}; see \texttt{REPRODUCIBILITY.md} therein),
archived at \href{https://doi.org/10.5281/zenodo.21398077}{doi:10.5281/zenodo.21398077}.

\end{document}